\newtheorem{theorem}{Theorem}[section]
\newtheorem{proposition}[theorem]{Proposition}
\theoremstyle{definition}
\newtheorem*{main}{Main Theorem}
\theoremstyle{remark}
\numberwithin{equation}{section}
\begin{document}

\title[essential spectra]{On the essential spectra of Toeplitz operators on Bergman spaces of the bi-disc}

%    Information for first author
\author{U\v{g}ur G\"{u}l}
%    Address of record for the research reported here
\address{Hacettepe University, Department of Mathematics, 06800, Beytepe, Ankara, TURKEY}
%    Current address
%\curraddr{Department of Mathematics and Statistics,
%Case Western Reserve University, Cleveland, Ohio 43403}
\email{gulugur@gmail.com}
%    \thanks will become a 1st page footnote.
%\thanks{The first author was supported in part by NSF Grant \#000000.}

%    Information for second author
%\author{S\"{o}nmez \c{S}ahuto\u{g}lu}
%\address{Department of Mathematics, Toledo University, Toledo OHIO, USA}
%\email{sonmezscv@gmail.com}
%\thanks{Support information for the second author.}

%    General info
\subjclass[2000]{32A45, 47B33}

%\date{January 1, 2001 and, in revised form, June 22, 2001.}

%\dedicatory{This paper is dedicated to our advisors.}

\keywords{Fredholm operator; Bergman spaces; $C^*$-algebra;
Toeplitz operators, essential spectra}

\begin{abstract}
In this note we deal with the problem of determining the essential
spectrum of a Toeplitz operator
$T_{f}:A^{2}(\mathbb{D}^{2})\rightarrow A^{2}(\mathbb{D}^{2})$
acting on the Bergman space $A^{2}(\mathbb{D}^{2})$ of the bi-disc
whose symbol $f\in C(\overline{\mathbb{D}^{2}})$ lies in the space
of continuous functions on the closure $\overline{\mathbb{D}^{2}}$
of the bi-disc.
\end{abstract}

\maketitle

\section{Introduction}

In this small note we relate the essential spectrum of a Toeplitz
operator $T_{f}:A^{2}(\mathbb{D}^{2})\rightarrow
A^{2}(\mathbb{D}^{2})$ with symbol $f\in
C(\overline{\mathbb{D}^{2}})$ to the spectra of the family of
Toeplitz operators
$T_{f_{\theta_{1}}}:A^{2}(\mathbb{D})\rightarrow
A^{2}(\mathbb{D})$ and
$T_{f_{\theta_{2}}}:A^{2}(\mathbb{D})\rightarrow
A^{2}(\mathbb{D})$ where $f_{\theta_{1}}(w):=f(e^{i\theta_{1}},w)$
and $f_{\theta_{2}}(z):=f(z,e^{i\theta_{2}})$.

We use a tensor product algebra technique to characterize the
C*-algebra\\
$\mathcal{T}(C(\overline{\mathbb{D}^{2}}))/K(A^{2}(\mathbb{D}^{2})$
as a tensor product C*-algebra\\
$\mathcal{T}(C(\overline{\mathbb{D}}))\otimes\mathcal{T}(C(\overline{\mathbb{D}}))/K(A^{2}(\mathbb{D}))\otimes
K(A^{2}(\mathbb{D}))$ and to embed this tensor product C*-algebra
inside the direct sum
$C(\mathbb{T},\mathcal{T}(C(\overline{\mathbb{D}})))\oplus
C(\mathbb{T},\mathcal{T}(C(\overline{\mathbb{D}})))$. We then
apply Atkinson's theorem to characterize the essential spectrum
$\sigma_{e}(T_{f})$ of the Toeplitz operator acting on the Bergman
space $A^{2}(\mathbb{D}^{2})$ of the bi-disc with continuous
symbol $f\in C(\overline{\mathbb{D}^{2}})$.

Actually our method is a modification of the same method in
\cite{DouglasHowe} which was applied to the same problem with
Hardy spaces. In particular we prove the following theorem:
\begin{main}
Let $f\in C(\overline{\mathbb{D}^{2}})$,
$T_{f}:A^{2}(\mathbb{D}^{2})\rightarrow A^{2}(\mathbb{D}^{2})$ be
the Toeplitz operator with symbol $f\in
C(\overline{\mathbb{D}^{2}})$ and $\sigma_{e}(T_{f})$ be the
essential spectrum of $T_{f}$. Then we have
$$\sigma_{e}(T_{f})=\bigg(\bigcup_{e^{i\theta_{1}}\in\mathbb{T}}\sigma(T_{f_{\theta_{1}}})\bigg)\cup\bigg(\bigcup_{e^{i\theta_{2}}\in\mathbb{T}}\sigma(T_{f_{\theta_{2}}})\bigg)$$
where $f_{\theta_{1}}(w):=f(e^{i\theta_{1}},w)$,
$f_{\theta_{2}}(z):=f(z,e^{i\theta_{2}})$, $T_{f_{\theta_{1}}}$
and $T_{f_{\theta_{2}}}$ are associated Toeplitz operators on
$A^{2}(\mathbb{D})$ with symbols $f_{\theta_{1}}$ and
$f_{\theta_{2}}$ respectively.
\end{main}

\section{Preliminaries}
In this section we give basic definitions and preliminary material
that we will use throughout.

Throughout the paper $\mathbb{D}:=\{z\in\mathbb{C}:\mid z\mid<1\}$
will denote the open unit disc, $\mathbb{T}:=\{z\in\mathbb{C}:\mid
z\mid=1\}$ will denote the unit circle. The Bergman space
$A^{2}(\mathbb{D})$ is defined to be the space of all analytic
functions $f:\mathbb{D}\rightarrow\mathbb{C}$ satisfying
$$\iint\limits_{\mathbb{D}}\mid f(z)\mid^{2}dA(z)<+\infty$$
The Bergman space $A^{2}(\mathbb{D})$ is a Hilbert space with
inner product
$$\langle f,g\rangle:=\iint\limits_{\mathbb{D}}f(z)\overline{g(z)}dA(z)$$
The Bergman space $A^{2}(\mathbb{D}^{2})$ of the bi-disc is
defined to be the space of all analytic functions
$f:\mathbb{D}^{2}\rightarrow\mathbb{C}$ satisfying
$$\iiiint\limits_{\mathbb{D}^{2}}\mid f(z,w)\mid^{2}dA(z)dA(w)<+\infty$$
As in one variable case the Bergman space $A^{2}(\mathbb{D}^{2})$
is also a Hilbert space with inner product
$$\langle f,g\rangle:=\iiiint\limits_{\mathbb{D}^{2}}f(z,w)\overline{g(z,w)}dA(z)dA(w)$$
The space of all continuous function on $\overline{\mathbb{D}}$ is
denoted by $C(\overline{\mathbb{D}})$, likewise $C(\mathbb{T})$
denotes the space of all continuous functions on $\mathbb{T}$ and
$C(\overline{\mathbb{D}^{2}})$ denotes the space of all continuous
functions on $\overline{\mathbb{D}^{2}}$.

For $f\in C(\overline{\mathbb{D}^{2}})$, the Toeplitz operator
$T_{f}:A^{2}(\mathbb{D}^{2})\rightarrow A^{2}(\mathbb{D}^{2})$ is
defined to be the operator $T_{f}:=P_{2}M_{f}$ where
$P_{2}:L^{2}(\mathbb{D}^{2})\rightarrow A^{2}(\mathbb{D}^{2})$ is
the unique orthogonal projection of $L^{2}(\mathbb{D}^{2})$ onto
$A^{2}(\mathbb{D}^{2})$ and $(M_{f}g)(z,w):=f(z,w)g(z,w)$ is the
multiplication operator. Likewise for $\varphi\in
C(\overline{\mathbb{D}})$ the Toeplitz operator
$T_{\varphi}:A^{2}(\mathbb{D})\rightarrow A^{2}(\mathbb{D})$ is
defined to be the operator $T_{\varphi}:=PM_{\varphi}$ where
$P:L^{2}(\mathbb{D})\rightarrow A^{2}(\mathbb{D})$ is the
orthogonal projection of $L^{2}(\mathbb{D})$ onto
$A^{2}(\mathbb{D})$ and $(M_{\varphi}g)(z):=\varphi(z)g(z)$ is the
multiplication operator.

Recall that a bounded linear operator $T$ on a Hilbert space $H$
is called Fredholm if the range of $T$ is closed, $\dim\ker(T)$
and $\dim\ker(T^{\ast})$ are finite. The Fredholm index $ind$ is
defined as
$$ind(T)=\dim(\ker(T))-\dim(\ker(T^{\ast}))$$
%%It is a very well known fact that (\cite{Murphy}) when the set of
%%Fredholm operators $\mathrm{F}\subset B(H)$ is equipped with
%%operator norm topology and $\mathbb{Z}$ is equipped with discrete
%%topology, the index function $ind:\mathrm{F}\rightarrow\mathbb{Z}$
%%is continuous.
The following Atkinson's characterization for Fredholm operators
is also well known:
\begin{theorem}{\cite[p.28, Theorem 1.4.16]{Murphy}}\label{Atkinson}
A bounded linear operator $T$ on a Hilbert space $H$ is Fredholm
if and only if
 $T+K(H)$ is invertible in the quotient algebra $B(H)/K(H)$, where $K(H)$ is the algebra of all compact operators on $H$.
\end{theorem}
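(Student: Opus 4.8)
The plan is to prove the equivalence directly, using the Calkin quotient map $\pi\colon B(H)\to B(H)/K(H)$ as the organizing device: invertibility of $\pi(T)$ in $B(H)/K(H)$ means exactly that there is an operator $S\in B(H)$, a \emph{parametrix}, for which $ST-I$ and $TS-I$ are both compact. I would split the argument into the two implications.

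For the forward direction (Fredholm $\Rightarrow$ $\pi(T)$ invertible) I would build a parametrix from the geometry forced by the Fredholm hypothesis. Because $\operatorname{ran}(T)$ is closed, we have the orthogonal decompositions $H=\ker(T)\oplus\ker(T)^{\perp}$ and $H=\operatorname{ran}(T)\oplus\ker(T^{*})$, and the restriction $T_{0}:=T|_{\ker(T)^{\perp}}\colon\ker(T)^{\perp}\to\operatorname{ran}(T)$ is a bounded bijection between Hilbert spaces, hence has a bounded inverse $T_{0}^{-1}$ by the open mapping theorem. Setting $S:=T_{0}^{-1}P$, with $P$ the orthogonal projection onto $\operatorname{ran}(T)$, a short computation gives $ST=I-P_{\ker(T)}$ and $TS=I-P_{\ker(T^{*})}$. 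Since $\ker(T)$ and $\ker(T^{*})$ are finite-dimensional, the projections $P_{\ker(T)}$ and $P_{\ker(T^{*})}$ are of finite rank and therefore compact, so $\pi(S)$ is a two-sided inverse of $\pi(T)$.

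For the converse ($\pi(T)$ invertible $\Rightarrow$ Fredholm) I would start from a parametrix $S$ with $ST=I+K_{1}$ and $TS=I+K_{2}$, where $K_{1},K_{2}\in K(H)$, and recover the three defining properties of a Fredholm operator. Finiteness of $\dim\ker(T)$ follows from the inclusion $\ker(T)\subseteq\ker(ST)=\ker(I+K_{1})$ once one knows that $\ker(I+K)$ is finite-dimensional for compact $K$; I would prove the latter by observing that $-K$ acts as the identity on $\ker(I+K)$, so the closed unit ball of $\ker(I+K)$ coincides with its image under the compact operator $-K$ and is therefore compact, whence $\ker(I+K)$ is finite-dimensional by Riesz's lemma. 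The same fact applied to $K_{2}^{*}$, together with the closed-range step below, will control $\dim\ker(T^{*})$.

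The main obstacle is the closed-range property of $T$, and this is where the real work lies. I would first show that $I+K$ has closed range for every compact $K$: writing $M=\ker(I+K)$, if $I+K$ were not bounded below on $M^{\perp}$ there would be unit vectors $x_{n}\in M^{\perp}$ with $(I+K)x_{n}\to0$; passing to a subsequence along which $Kx_{n}$ converges (possible since $K$ is compact) forces $x_{n}$ to converge to a unit vector in $M\cap M^{\perp}=\{0\}$, which is absurd, so $I+K$ is bounded below on $M^{\perp}$ and $\operatorname{ran}(I+K)$ is closed. Applying this to $K_{2}$ gives that $\operatorname{ran}(TS)=\operatorname{ran}(I+K_{2})$ is closed, and its orthogonal complement $\ker(I+K_{2}^{*})$ is finite-dimensional by the previous paragraph, so $\operatorname{ran}(I+K_{2})$ is closed of finite codimension. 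Since $\operatorname{ran}(T)\supseteq\operatorname{ran}(TS)$, and any subspace of $H$ containing a closed subspace of finite codimension is itself closed (a quotient-space argument modulo that subspace), I conclude that $\operatorname{ran}(T)$ is closed with $\dim\ker(T^{*})=\dim\operatorname{ran}(T)^{\perp}<\infty$. Together with the finiteness of $\dim\ker(T)$, this shows $T$ is Fredholm and completes the equivalence.
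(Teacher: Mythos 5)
Your proof is correct, but note that the paper itself offers no proof of this statement: it is quoted verbatim as background, with the proof delegated to the cited reference (Murphy, Theorem 1.4.16), so there is no in-paper argument to compare against. What you have written is the standard self-contained proof of Atkinson's theorem, and every step checks out: in the forward direction the parametrix $S=T_{0}^{-1}P$ does satisfy $ST=I-P_{\ker(T)}$ and $TS=I-P_{\ker(T^{*})}$ (using $\operatorname{ran}(T)^{\perp}=\ker(T^{*})$, which requires the closed-range hypothesis only to make $T_{0}^{-1}$ bounded via the inverse mapping theorem, as you say); in the converse you correctly inline the two pieces of Riesz theory that Murphy's treatment isolates as separate results, namely that $\ker(I+K)$ is finite-dimensional (your observation that the closed unit ball of $\ker(I+K)$ is its own image under the compact operator $-K$, hence compact, is exactly right) and that $\operatorname{ran}(I+K)$ is closed (your bounded-below-on-$M^{\perp}$ contradiction argument is complete once one adds the one-line remark that the limit $-y$ of the $x_{n}$ lies in $M$ because $(I+K)(-y)=\lim(I+K)x_{n}=0$, which is immediate). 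The finishing moves --- $\ker(T)\subseteq\ker(I+K_{1})$, $\operatorname{ran}(T)\supseteq\operatorname{ran}(I+K_{2})$, and the quotient-map argument that a subspace containing a closed finite-codimensional subspace is itself closed --- are all sound, and you correctly use Schauder's theorem implicitly when applying the kernel lemma to $K_{2}^{*}$. In short: a complete and correct proof, following essentially the same route (generalized inverse one way, Riesz theory of $I+\text{compact}$ the other) as the textbook source the paper cites.
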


Let $H_{1}$ and $H_{2}$ be two given Hilbert spaces. On the
algebraic tensor product $H_{1}\otimes H_{2}$ of $H_{1}$ and
$H_{2}$, there is a unique inner product $\langle.,.\rangle$
satisfying the following equation
\begin{equation*}
\langle x_{1}\otimes y_{1},x_{2}\otimes y_{2}\rangle=\langle
x_{1},x_{2}\rangle_{H_{1}}\langle y_{1},y_{2}\rangle_{H_{2}}
\end{equation*}
$\forall x_{1},x_{2}\in H_{1}\quad y_{1},y_{2}\in H_{2}$ (See
\cite[p.185]{Murphy}). For any $T\in B(H_{1})$ and $S\in B(H_{2})$
there is a unique operator $T\hat{\otimes} S\in$ $B(H_{1}\otimes
H_{2})$ satisfying the following equation:
\begin{equation*}
(T\hat{\otimes} S)(x\otimes y)=Tx\otimes Sy
\end{equation*}
For any two C*-algebras $A\subset B(H_{1})$ and $B\subset
B(H_{2})$ the algebraic tensor product $A\odot B$ is defined to be
the linear span of operators of the form $T\hat{\otimes} S$ i.e.
\begin{equation*}
A\odot B=\{\sum_{j=1}^{n}T_{j}\hat{\otimes} S_{j}:T_{j}\in A,\quad
S_{j}\in B\}
\end{equation*}
The algebraic tensor product $A\odot B$ becomes a *-algebra with
multiplication
\begin{equation*}
(T_{1}\hat{\otimes} S_{1})(T_{2}\hat{\otimes} S_{2})=
T_{1}T_{2}\hat{\otimes} S_{1}S_{2}
\end{equation*}
and involution
\begin{equation*}
(T\hat{\otimes} S)^{*}=T^{*}\hat{\otimes} S^{*}
\end{equation*}
However there might be more than one norm making the closure of
$A\odot B$ into a $C^*$-algebra. A $C^*$-algebra $A$ is called
``nuclear" if for any $C^*$-algebra $B$ there is a unique pre
$C^*$-algebra norm on the algebraic tensor product $A\odot B$ of
$A$ and $B$. A well-known theorem of Takesaki asserts that any
commutative C*-algebra is nuclear (see \cite[p.205]{Murphy}). An
extension of a C*-algebra by nuclear C*-algebras is nuclear, i.e.
if $A$, $B$ and $C$ are C*-algebras s.t. the following sequence
\begin{equation*}
0\xrightarrow{} A\xrightarrow{j} B\xrightarrow{\pi}
C\xrightarrow{} 0
\end{equation*}
is short exact and $A$ and $C$ are nuclear then $B$ is also
nuclear (see \cite[p. 212]{Murphy}). For any separable Hilbert
space $H$ the C*-algebra of all compact operators $K(H)$ on $H$ is
nuclear (see \cite{Murphy} p. 183 and 196). For any separable
Hilbert spaces $H_{1}$ and $H_{2}$ we have
\begin{equation}
K(H_{1}\otimes H_{2})=K(H_{1})\otimes K(H_{2})
\end{equation}
(See \cite[p.207]{DouglasHowe}).

The essential spectrum $\sigma_{e}(T)$ of an operator $T$ acting
on a Banach
  space $X$ is the spectrum of the coset of $T$ in the Calkin algebra
  $B(X)/K(X)$, the algebra of bounded linear operators modulo
  compact operators. The well known Atkinson's theorem identifies the essential
  spectrum of $T$ as the set of all $\lambda\in$ $\mathbb{C}$ for
  which $\lambda I-T$ is not a Fredholm operator.

To give a Fredholm criteria of the operators in
$\mathcal{T}(C(\overline{\mathbb{D}^{2}}))/K(A^{2}(\mathbb{D}^{2})$,
we will use Atkinson's characterization. Hence, firstly we need to
know the structure of the quotient
$\mathcal{T}(C(\overline{\mathbb{D}^{2}}))/K(A^{2}(\mathbb{D}^{2})$.
To investigate this structure we use Douglas and Howe's method in
\cite{DouglasHowe} and identify the space $A^2(\mathbb{D}^2)$ as
the tensor product of one variable space $A^{2}(\mathbb{D})$ with
itself. For this purpose, we recall the one-variable case studied
by Coburn \cite{Coburn}. In \cite{Coburn} Coburn showed that the
following sequence
\begin{equation}\label{shortexactseq}
0\longrightarrow K(A^2(\mathbb{D}))\stackrel{j}{\longrightarrow}
\mathcal{T}(C(\overline{\mathbb{D}}))\stackrel{\pi}{\longrightarrow}
C(\mathbb{T})\longrightarrow 0
\end{equation}
is short exact, where
\[\mathcal{T}(C(\mathbb{D}))=C^*(\{T_{\varphi}:\varphi\in C(\overline{\mathbb{D}})\})\]
is the $C^*$-algebra generated by Toeplitz operators with
continuous symbols and $\pi$ is the restriction map
$\pi(T_{\varphi}):=\varphi|_{\mathbb{T}}$.

Throughout the $C^*$-algebra
$\mathcal{T}(C(\overline{\mathbb{D}}))$ will be denoted by
$\mathcal{T}$ and similarly,
$\mathcal{T}(C(\overline{\mathbb{D}^{2}}))$ will be denoted by
$\mathcal{T}_2$. Since $K(A^2(\mathbb{D}))$ is nuclear and
$C(\mathbb{T})$ is commutative and hence nuclear, by Equation
\eqref{shortexactseq} $\mathcal{T}$ is nuclear. Therefore all the
$C^*$-algebras that we will deal with in this paper will be
nuclear. Since the C*-algebras are nuclear it is possible to
identify $\mathcal{T}_2$ with $\mathcal{T}\otimes\mathcal{T}$
corresponding to the identification of $A^2(\mathbb{D}^2)$ with
$A^2(\mathbb{D})\otimes A^2(\mathbb{D})$. Moreover by equation
(2.1) we know that
\[K(A^2(\mathbb{D}^2))=K(A^2(\mathbb{D}))\otimes K(A^2(\mathbb{D}))\] and that $K(A^{2}(\mathbb{D}^{2}))$ is properly contained in the
commutator ideal of $\mathcal{T}_2$.

We now recall the following propositions. Firstly,

\begin{theorem}{\cite[Theorem 6.5.2, p.211]{Murphy}}\label{tensor}
Let $\mathcal{J}$, $\mathcal{A}$, $\mathcal{B}$ and $\mathcal{D}$
be $C^*$-algebras and suppose that
\[0\longrightarrow \mathcal{J}\stackrel{j}{\longrightarrow}\mathcal{A}\stackrel{\pi}{\longrightarrow} \mathcal{B}\longrightarrow 0 \]
is a short exact sequence of $C^*$-algebras. Suppose also that
$\mathcal{B}\otimes\mathcal{D}$ has a unique $C^*$-norm (this is
the case if $\mathcal{B}$ or $\mathcal{D}$ is nuclear). Then
\[0\longrightarrow \mathcal{J}\otimes \mathcal{D}\stackrel{j\otimes 1}{\longrightarrow} \mathcal{A}\otimes\mathcal{D} \stackrel{\pi\otimes 1}{\longrightarrow} \mathcal{B}\otimes \mathcal{D}\longrightarrow 0 \]
is a short exact sequence of $C^*$-algebras.
\end{theorem}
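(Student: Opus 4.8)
The plan is to establish the three hallmarks of exactness—injectivity of $j\otimes 1$, surjectivity of $\pi\otimes 1$, and the equality $\ker(\pi\otimes 1)=\operatorname{im}(j\otimes 1)$—and to let the hypothesis that $\mathcal{B}\otimes\mathcal{D}$ carries a \emph{unique} $C^*$-norm do its work through the fact that on the algebraic tensor product $\mathcal{B}\odot\mathcal{D}$ the minimal and maximal $C^*$-tensor norms then coincide. Throughout I identify $\mathcal{J}$ with the closed ideal $\ker\pi\trianglelefteq\mathcal{A}$ and $\mathcal{B}$ with $\mathcal{A}/\mathcal{J}$.

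The two outer maps are the easy part. For surjectivity, $(\pi\otimes 1)(\mathcal{A}\odot\mathcal{D})=\mathcal{B}\odot\mathcal{D}$ because $\pi$ is onto, and this is dense in $\mathcal{B}\otimes\mathcal{D}$; since the range of a $*$-homomorphism between $C^*$-algebras is a (closed) $C^*$-subalgebra, $\pi\otimes 1$ is onto. For injectivity of $j\otimes 1$, I invoke the injectivity (spatiality) of the minimal tensor product: a closed subalgebra $\mathcal{J}\subseteq\mathcal{A}$ induces an isometric embedding $\mathcal{J}\otimes_{\mathrm{min}}\mathcal{D}\hookrightarrow\mathcal{A}\otimes_{\mathrm{min}}\mathcal{D}$. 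The inclusion $\operatorname{im}(j\otimes 1)\subseteq\ker(\pi\otimes 1)$ is immediate from $(\pi\otimes 1)(j\otimes 1)=(\pi j)\otimes 1=0$ on elementary tensors, extended by linearity and continuity.

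The substance of the theorem is the reverse inclusion, equivalently the injectivity of the induced surjective $*$-homomorphism $\psi:(\mathcal{A}\otimes\mathcal{D})/(\mathcal{J}\otimes\mathcal{D})\to\mathcal{B}\otimes\mathcal{D}$ (well defined because $\pi\otimes 1$ annihilates the ideal $\mathcal{J}\otimes\mathcal{D}=\operatorname{im}(j\otimes 1)$). My plan here is a norm-comparison (squeeze) argument. The quotient $Q:=(\mathcal{A}\otimes_{\mathrm{min}}\mathcal{D})/(\mathcal{J}\otimes_{\mathrm{min}}\mathcal{D})$ is a $C^*$-algebra containing a dense copy of $\mathcal{B}\odot\mathcal{D}$, so it is the completion of $\mathcal{B}\odot\mathcal{D}$ in some $C^*$-norm $\gamma$. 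Since $\psi:Q\to\mathcal{B}\otimes_{\mathrm{min}}\mathcal{D}$ is a $*$-homomorphism restricting to the identity on $\mathcal{B}\odot\mathcal{D}$, one gets $\gamma\ge\|\cdot\|_{\mathrm{min}}$; and since the maximal tensor product is always exact, $\mathcal{A}\otimes_{\mathrm{max}}\mathcal{D}$ maps onto $Q$ and the quotient of $\|\cdot\|_{\mathrm{max}}$ by $\mathcal{J}\otimes_{\mathrm{max}}\mathcal{D}$ realizes $\|\cdot\|_{\mathrm{max}}$ on $\mathcal{B}\odot\mathcal{D}$, giving $\gamma\le\|\cdot\|_{\mathrm{max}}$. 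Thus $\|\cdot\|_{\mathrm{min}}\le\gamma\le\|\cdot\|_{\mathrm{max}}$ on $\mathcal{B}\odot\mathcal{D}$, and the unique-norm hypothesis collapses these to equality, forcing $\gamma=\|\cdot\|_{\mathrm{min}}$ and hence $\psi$ to be an isometric isomorphism.

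The main obstacle is precisely this middle step: the functor $-\otimes\mathcal{D}$ is not exact for arbitrary $\mathcal{D}$ (minimal tensoring fails middle exactness exactly when $\mathcal{D}$ is non-exact), so the unique-norm hypothesis cannot be bypassed and must enter honestly through the squeeze $\|\cdot\|_{\mathrm{min}}\le\gamma\le\|\cdot\|_{\mathrm{max}}$. The remaining ingredients—closedness of ranges of $*$-homomorphisms, injectivity of $\otimes_{\mathrm{min}}$, and right-exactness of $\otimes_{\mathrm{max}}$—are standard structural facts that I would cite rather than reprove.
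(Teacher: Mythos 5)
The paper offers no proof of this statement at all: it is imported verbatim from Murphy's book \cite{Murphy}, so there is no internal argument to compare yours against; what follows judges your proposal on its own merits. Your proof is correct, and it is essentially the standard argument for this result. The outer maps are handled properly (injectivity of $j\otimes 1$ from injectivity of the spatial norm, surjectivity of $\pi\otimes 1$ from density of $\mathcal{B}\odot\mathcal{D}$ plus closedness of ranges of $*$-homomorphisms), and you correctly locate the substance in middle exactness, resolved by realizing $Q=(\mathcal{A}\otimes_{\mathrm{min}}\mathcal{D})/(\mathcal{J}\otimes_{\mathrm{min}}\mathcal{D})$ as the completion of $\mathcal{B}\odot\mathcal{D}$ under a $C^*$-norm $\gamma$ and letting the unique-norm hypothesis force $\gamma=\|\cdot\|_{\mathrm{min}}$, so that the induced surjection $\psi\colon Q\to\mathcal{B}\otimes_{\mathrm{min}}\mathcal{D}$ is isometric, hence injective. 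Two points should be tightened. First, your opening assertion that $Q$ ``contains a dense copy of $\mathcal{B}\odot\mathcal{D}$'' is precisely the delicate claim: a priori the canonical map $\theta\colon\mathcal{B}\odot\mathcal{D}\to Q$ (send $b\otimes d$ to $a\otimes d+\mathcal{J}\otimes\mathcal{D}$ for any lift $a$ of $b$) could fail to be injective, in which case $\gamma$ would only be a $C^*$-seminorm and the uniqueness hypothesis could not be applied. The repair is already contained in your next sentence and only needs reordering: since $\psi\circ\theta$ is the inclusion $\mathcal{B}\odot\mathcal{D}\hookrightarrow\mathcal{B}\otimes_{\mathrm{min}}\mathcal{D}$ and $\psi$ is contractive, one gets $\gamma\geq\|\cdot\|_{\mathrm{min}}$, which simultaneously proves that $\theta$ is injective (so $\gamma$ is a genuine norm) and gives the lower bound; state this first and then deduce the ``dense copy'' claim. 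Second, the excursion through $\otimes_{\mathrm{max}}$ is redundant: once $\gamma$ is known to be a $C^*$-norm on $\mathcal{B}\odot\mathcal{D}$, the hypothesis that this algebraic tensor product admits a unique $C^*$-norm immediately yields $\gamma=\|\cdot\|_{\mathrm{min}}$, with no need for the upper bound $\gamma\leq\|\cdot\|_{\mathrm{max}}$ (which in any case holds for every $C^*$-norm by definition of the maximal norm). Neither point is a gap in substance; both are matters of arranging the argument so that the injectivity of $\theta$ is visibly established before it is used.
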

Second, we need the following standard result concerning tensor
products:
\begin{proposition}{\cite[Proposition 3]{DouglasHowe}}
If $X$ is a compact Hausdorff space and $\mathcal{B}$ is a
$C^*$-algebra, then $C(X)\otimes\mathcal{B}$ is naturally
isomorphic to the $C^*$-algebra $C(X,\mathcal{B})$ of continuous
functions from $X$ to $\mathcal{B}$. Moreover, if $\sum_{i=1}^n
f_i\otimes B_i$ in $C(X)\otimes\mathcal{B}$ corresponds to $F$ in
$C(X,\mathcal{B})$, then $F(x)=\sum_{i=1}^n f_i(x)B_i$.
\end{proposition}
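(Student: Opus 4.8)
The plan is to construct an explicit $*$-homomorphism $\Phi$ from the algebraic tensor product $C(X)\odot\mathcal{B}$ into $C(X,\mathcal{B})$, show that it is isometric for the unique $C^{*}$-norm on $C(X)\otimes\mathcal{B}$, and that it has dense range; completeness then promotes the image to all of $C(X,\mathcal{B})$. Concretely, I would define $\Phi$ on elementary tensors by $\Phi(f\otimes B)(x):=f(x)B$ and extend linearly. Since the map $(f,B)\mapsto(x\mapsto f(x)B)$ is bilinear into $C(X,\mathcal{B})$, the universal property of the algebraic tensor product guarantees that $\Phi$ is well defined. A direct check on elementary tensors, using $\Phi((f\otimes B)(g\otimes C))(x)=(fg)(x)BC=(f(x)B)(g(x)C)$ and $\Phi((f\otimes B)^{*})(x)=\overline{f(x)}B^{*}=(f(x)B)^{*}$, shows that $\Phi$ respects both the multiplication and the involution of $C(X)\odot\mathcal{B}$, so it is a $*$-homomorphism.

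Next I would verify that $\Phi$ is injective. Writing an element of $C(X)\odot\mathcal{B}$ as $\sum_{i}f_{i}\otimes B_{i}$ with the $B_{i}$ linearly independent in $\mathcal{B}$, the vanishing of $\Phi(\sum_{i}f_{i}\otimes B_{i})$ means $\sum_{i}f_{i}(x)B_{i}=0$ for every $x\in X$, and linear independence of the $B_{i}$ forces $f_{i}(x)=0$ for all $x$, hence $f_{i}=0$. Consequently the pulled-back seminorm $u\mapsto\|\Phi(u)\|_{\infty}$ on $C(X)\odot\mathcal{B}$, being the restriction to the $*$-subalgebra $\Phi(C(X)\odot\mathcal{B})$ of the $C^{*}$-norm of $C(X,\mathcal{B})$, is in fact a genuine $C^{*}$-norm. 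Here nuclearity enters: $C(X)$ is commutative, so by Takesaki's theorem it is nuclear and $C(X)\odot\mathcal{B}$ carries a \emph{unique} $C^{*}$-norm. Therefore the pulled-back norm must coincide with the norm of $C(X)\otimes\mathcal{B}$, so $\Phi$ is isometric and extends to an isometric $*$-homomorphism $\overline{\Phi}\colon C(X)\otimes\mathcal{B}\to C(X,\mathcal{B})$.

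It remains to show $\overline{\Phi}$ is onto, i.e. that $\Phi$ has dense range; this approximation step is the real content of the argument. Given $F\in C(X,\mathcal{B})$ and $\varepsilon>0$, I would use continuity of $F$ together with compactness of $X$: for each $x$ choose an open neighbourhood $U_{x}$ on which $\|F(y)-F(x)\|<\varepsilon$, extract a finite subcover $U_{x_{1}},\dots,U_{x_{n}}$, and take a partition of unity $\{\varphi_{j}\}_{j=1}^{n}$ in $C(X)$ subordinate to it. Setting $G:=\sum_{j}\varphi_{j}\otimes F(x_{j})$, so that $G(x)=\sum_{j}\varphi_{j}(x)F(x_{j})$, the estimate $\|F(x)-G(x)\|\le\sum_{j}\varphi_{j}(x)\|F(x)-F(x_{j})\|\le\varepsilon$ (only indices with $\varphi_{j}(x)\neq 0$, hence $x\in U_{x_{j}}$, contribute) gives $\|F-\Phi(G)\|_{\infty}\le\varepsilon$. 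Thus $\Phi(C(X)\odot\mathcal{B})$ is dense in $C(X,\mathcal{B})$. Since $\overline{\Phi}$ is isometric its image is complete, hence closed, and being dense it is all of $C(X,\mathcal{B})$; so $\overline{\Phi}$ is the desired isomorphism, and the displayed formula $F(x)=\sum_{i}f_{i}(x)B_{i}$ is immediate from the definition of $\Phi$. The main obstacle is the density argument, which rests on the vector-valued Stone--Weierstrass/partition-of-unity approximation; the norm-identification, by contrast, is handled cleanly by the nuclearity of $C(X)$ already invoked in the paper.
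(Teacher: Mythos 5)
Your proof is correct, and since the paper offers no proof of this proposition at all (it is quoted verbatim, with citation, from \cite{DouglasHowe}), your argument simply supplies the standard one: define $\Phi$ on elementary tensors, use injectivity plus nuclearity of the commutative algebra $C(X)$ to identify the pulled-back sup-norm with the unique $C^*$-norm on $C(X)\odot\mathcal{B}$, and get surjectivity from the partition-of-unity approximation. All steps check out, including the two points where care is needed (writing elements with linearly independent $B_i$ to get injectivity, and noting that the restriction of the norm of $C(X,\mathcal{B})$ to the image is a $C^*$-norm so that uniqueness applies).
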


\section{Main Results}

In this section we prove the main result of this paper which gives
a criteria of Fredholmness of the operators in $\mathcal{T}_2$. In
doing this we use the Atkinson's characterization for the Fredholm
operators. Hence, first of all, we need to know the structure of
the $C^*$-algebra $\mathcal{T}_2/K(A^2(\mathbb{D}^2))$. For this,
applying Theorem \eqref{tensor} to the short exact sequence
\eqref{shortexactseq}, we set the following commutative diagram
with exact rows and columns.

$\begin{array}[c]{lccccccll}
   &  & 0\hspace*{0.5cm} &  & 0\hspace*{0.5cm} &  & 0\hspace*{0.5cm} & &  \\
   &  &  \downarrow{\hspace*{0.5cm}} &  &  \downarrow{\hspace*{0.5cm}} &  &  \downarrow{\hspace*{0.5cm}} &  &  \\
  0 & \longrightarrow & K(A^2(\mathbb{D}))\otimes K(A^2(\mathbb{D})) & \stackrel{j\otimes1}{\longrightarrow} &\mathcal{T}\otimes K(A^2(\mathbb{D})) &\stackrel{\pi\otimes1}{\longrightarrow} & C(\mathbb{T})\otimes K(A^2(\mathbb{D})) & \longrightarrow & 0 \\
   &  &  \downarrow{\vspace*{4cm}}\scriptstyle{1\otimes j} &  &  \downarrow{\vspace*{1cm}}\scriptstyle{1\otimes j} &  &  \downarrow{\vspace*{1cm}}\scriptstyle{1\otimes j} &  &  \\
  0 & \longrightarrow & K(A^2(\mathbb{D}))\otimes\mathcal{T} & \stackrel{j\otimes1}{\longrightarrow} & \mathcal{T}\otimes\mathcal{T} &\stackrel{\pi\otimes1}{\longrightarrow} & C(\mathbb{T})\otimes\mathcal{T} & \longrightarrow & 0 \\
   &  &  \downarrow{\vspace*{2cm}}\scriptstyle{1\otimes \pi} &  &  \downarrow{\vspace*{2cm}}\scriptstyle{1\otimes \pi} &  &  \downarrow{\vspace*{2cm}}\scriptstyle{1\otimes \pi} &  &  \\
  0 & \longrightarrow & K(A^2(\mathbb{D}))\otimes C(\mathbb{T}) & \stackrel{j\otimes1}{\longrightarrow} & \mathcal{T}\otimes C(\mathbb{T}) &\stackrel{\pi\otimes1}{\longrightarrow} & C(\mathbb{T})\otimes C(\mathbb{T}) & \longrightarrow & 0 \\
   &  &  \downarrow{\hspace*{0.5cm}} &  &  \downarrow{\hspace*{0.5cm}} &  &  \downarrow{\hspace*{0.5cm}} &  &  \\
   &  & 0\hspace*{0.5cm} &  & 0\hspace*{0.5cm} &  & 0\hspace*{0.5cm} & &  \\
\end{array}$
Consideration of the diagonal map shows that the commutator ideal
of $\mathcal{T}\otimes\mathcal{T}$ is the subspace spanned by
$\mathcal{T}\otimes K(A^2(\mathbb{D}))$ and
$K(A^2(\mathbb{D}))\otimes\mathcal{T}$ and the corresponding
quotient is $C(\mathbb{T})\otimes C(\mathbb{T})$ which is
naturally isomorphic to $C(\mathbb{T}\times\mathbb{T})$. We are
primarily interested, however, in the dual sequence
\[ ...\longrightarrow K(A^2(\mathbb{D}))\otimes K(A^2(\mathbb{D})) \stackrel{\alpha}{\longrightarrow} \mathcal{T}\otimes\mathcal{T} \stackrel{(\pi\otimes1)\oplus(1\otimes\pi)}{\longrightarrow}  (C(\mathbb{T})\otimes\mathcal{T})\oplus (\mathcal{T}\otimes C(\mathbb{T}))  \longrightarrow ...  \]
where $\alpha=(1\otimes j)(j\otimes 1)=(j\otimes 1)(1\otimes j)$
which is exact in the middle. The proof involves routine diagram chasing.\\

We now can prove the following result.

\begin{proposition}\label{restate}
There exist $*$-homomorphisms $\gamma_{\theta_1}$ and
$\gamma_{\theta_2}$ from $\mathcal{T}_2$ into
$C(\mathbb{T},\mathcal{T})$ such that $\gamma_{\theta_1}(T_f)=F$
and $\gamma_{\theta_2}(T_f)=G$, where
$F(e^{i\theta_{1}})=T_{f(e^{i\theta_{1}},\cdot)}$ and
$G(e^{i\theta_{2}})=T_{f(\cdot,e^{i\theta_{2}})}$ and $f\in
C(\overline{\mathbb{D}^{2}})$. Moreover, under natural
identification of $\mathcal{T}_2$ with
$\mathcal{T}\otimes\mathcal{T}$, $C(\mathbb{T})\otimes\mathcal{T}$
with $C(\mathbb{T},\mathcal{T})$ and $\mathcal{T}\otimes
C(\mathbb{T})$ with $C(\mathbb{T},\mathcal{T})$, then
$\pi\otimes1=\gamma_{\theta_1}$ and
$1\otimes\pi=\gamma_{\theta_2}$.
\end{proposition}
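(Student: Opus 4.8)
The plan is to take the two maps that the commutative diagram has already produced as the desired homomorphisms, and then to verify the stated evaluation formula by reducing to symbols that split as products of one-variable functions.

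First I would \emph{define} $\gamma_{\theta_1}:=\pi\otimes 1$ and $\gamma_{\theta_2}:=1\otimes\pi$, viewed as maps into $C(\mathbb{T},\mathcal{T})$ via the natural identification of $C(\mathbb{T})\otimes\mathcal{T}$ (resp.\ $\mathcal{T}\otimes C(\mathbb{T})$) with $C(\mathbb{T},\mathcal{T})$ furnished by the Douglas--Howe tensor-product proposition, together with the identification of $\mathcal{T}_2$ with $\mathcal{T}\otimes\mathcal{T}$. Since $\pi$ is a $*$-homomorphism by Coburn's sequence \eqref{shortexactseq} and the tensoring by the identity in Theorem \ref{tensor} preserves $*$-homomorphisms, each $\gamma_{\theta_j}$ is a $*$-homomorphism. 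With this definition the last sentence of the statement ($\pi\otimes1=\gamma_{\theta_1}$ and $1\otimes\pi=\gamma_{\theta_2}$) holds tautologically, so the entire content of the proposition is the evaluation formula on Toeplitz operators.

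The key step is the computation on product symbols. For $g,h\in C(\overline{\mathbb{D}})$ put $f(z,w)=g(z)h(w)$. Under the identification $A^2(\mathbb{D}^2)\cong A^2(\mathbb{D})\otimes A^2(\mathbb{D})$ the Bergman projection factors as $P_2=P\hat{\otimes}P$ and $M_f=M_g\hat{\otimes}M_h$, whence $T_f=P_2M_f=(P\hat{\otimes}P)(M_g\hat{\otimes}M_h)=T_g\hat{\otimes}T_h$. Therefore
\[
\gamma_{\theta_1}(T_f)=(\pi\otimes1)(T_g\hat{\otimes}T_h)=\pi(T_g)\otimes T_h=(g|_{\mathbb{T}})\otimes T_h,
\]
which, under the Douglas--Howe identification, is the element of $C(\mathbb{T},\mathcal{T})$ whose value at $e^{i\theta_1}$ is $g(e^{i\theta_1})T_h=T_{g(e^{i\theta_1})h}$. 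Since $f(e^{i\theta_1},\cdot)=g(e^{i\theta_1})h(\cdot)$, this value is exactly $T_{f(e^{i\theta_1},\cdot)}=F(e^{i\theta_1})$, as claimed; the computation for $\gamma_{\theta_2}$ is symmetric in the two variables.

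Finally I would promote this from product symbols to all of $C(\overline{\mathbb{D}^2})$ by a density argument. The maps $f\mapsto T_f$, $f\mapsto\gamma_{\theta_1}(T_f)$ and $f\mapsto F$ are all norm continuous (the first because $\|T_f\|\le\|f\|_{\infty}$, the others because $*$-homomorphisms and the evaluation identifications are contractive), so the set of $f$ for which $\gamma_{\theta_1}(T_f)=F$ is closed in $C(\overline{\mathbb{D}^2})$. It contains every finite sum of product symbols $g(z)h(w)$, and by the Stone--Weierstrass theorem such sums are dense in $C(\overline{\mathbb{D}^2})$; hence the formula holds for all $f$. I expect the only real obstacle to be bookkeeping: threading a single element cleanly through the three separate identifications ($\mathcal{T}_2\cong\mathcal{T}\otimes\mathcal{T}$ and the two copies of $C(\mathbb{T})\otimes\mathcal{T}\cong C(\mathbb{T},\mathcal{T})$) so that evaluation at $e^{i\theta_1}$ genuinely returns the one-variable operator $T_{f(e^{i\theta_1},\cdot)}$ and not some cosmetically different object.
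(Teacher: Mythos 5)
Your proof is correct and takes essentially the same approach as the paper's: define $\gamma_{\theta_1}=\pi\otimes1$ and $\gamma_{\theta_2}=1\otimes\pi$ under the natural identifications, verify the evaluation formula on (sums of) product symbols, and conclude by density of such symbols in $C(\overline{\mathbb{D}^{2}})$. You in fact supply details the paper leaves implicit, namely the factorization $T_{f}=T_{g}\hat{\otimes}T_{h}$ for $f(z,w)=g(z)h(w)$ via $P_{2}=P\hat{\otimes}P$, and the contractivity/closed-equalizer argument that makes the density step rigorous.
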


\begin{proof}
Since $\pi\otimes1$ is a $*$-homomorphism from
$\mathcal{T}\otimes\mathcal{T}$ into
$C(\mathbb{M})\otimes\mathcal{T}$, it induces a $*$-homomorphism
$\gamma_{\theta_1}$ from $\mathcal{T}_2$ onto
$C(\mathbb{T},\mathcal{T})$ and we need only compute the action of
$\gamma_{\theta_1}$ on $T_f$ for $f\in
C(\overline{\mathbb{D}^{2}})$. If
$\phi_1,\phi_2,\ldots,\phi_{2n}\in C(\overline{\mathbb{D}})$, then
$f(z_1,z_2)=\sum_{i=1}^n \phi_i(z_1)\phi_{n+i}(z_2)$ is in
$C(\overline{\mathbb{D}})\otimes C(\overline{\mathbb{D}})$.
Moreover, since $T_f$ corresponds to $(\sum_{i=1}^n
T_{\phi_i}\otimes T_{\phi_{n+i}})$, we have $(\pi\otimes
1)[(\sum_{i=1}^n T_{\phi_i}\otimes T_{\phi_{n+i}})]=(\sum_{i=1}^n
\phi_i\otimes T_{\phi_{n+i}})$ and if we set
$\gamma_{\theta_1}(T_f)=F$, then $F(e^{i\theta_1})=(\sum_{i=1}^n
\phi_i(e^{i\theta_1}) T_{\phi_{n+i}})$. Lastly, since the
functions of the form $f$ are dense in
$C(\overline{\mathbb{D}^{2}})$, the result follows.
\end{proof}

We now restate the preceding exactness result as follows:

\begin{theorem}\label{quotientcompact}
The sequence
\begin{equation}
...\longrightarrow
K(A^2(\mathbb{D}^2))\stackrel{i}{\longrightarrow}
\mathcal{T}_2\stackrel{\gamma_{\theta_1}\oplus
\gamma_{\theta_2}}{\longrightarrow}
C(\mathbb{T},\mathcal{T})\oplus
C(\mathbb{T},\mathcal{T})\longrightarrow ...
\end{equation}
is exact in the middle.
\end{theorem}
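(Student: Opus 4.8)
The plan is to obtain the asserted middle exactness directly from the dual sequence displayed just before Proposition~\ref{restate}, of which the present statement is only a rephrasing. Indeed, by Proposition~\ref{restate} the homomorphism $\gamma_{\theta_1}\oplus\gamma_{\theta_2}$ coincides with $(\pi\otimes 1)\oplus(1\otimes\pi)$ once $\mathcal{T}_2$ is identified with $\mathcal{T}\otimes\mathcal{T}$ and the two summands $C(\mathbb{T},\mathcal{T})$ with $C(\mathbb{T})\otimes\mathcal{T}$ and $\mathcal{T}\otimes C(\mathbb{T})$; and by Equation~(2.1) the map $i$ becomes $\alpha=(j\otimes 1)(1\otimes j)=(1\otimes j)(j\otimes 1)$ under the identification $K(A^2(\mathbb{D}^2))=K(A^2(\mathbb{D}))\otimes K(A^2(\mathbb{D}))$. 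So it suffices to establish $\operatorname{im}(\alpha)=\ker\big((\pi\otimes 1)\oplus(1\otimes\pi)\big)$ inside $\mathcal{T}\otimes\mathcal{T}$. The inclusion $\operatorname{im}(\alpha)\subseteq\ker$ is immediate: writing $\alpha=(j\otimes 1)(1\otimes j)$ shows $\operatorname{im}(\alpha)\subseteq\operatorname{im}(j\otimes 1)=\ker(\pi\otimes 1)$ by middle-row exactness, while writing $\alpha=(1\otimes j)(j\otimes 1)$ shows $\operatorname{im}(\alpha)\subseteq\operatorname{im}(1\otimes j)=\ker(1\otimes\pi)$ by middle-column exactness, so both components annihilate $\operatorname{im}(\alpha)$.

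For the reverse inclusion I would first rewrite $\ker\big((\pi\otimes 1)\oplus(1\otimes\pi)\big)=\ker(\pi\otimes 1)\cap\ker(1\otimes\pi)$ and identify each factor by the exactness of the rows supplied by Theorem~\ref{tensor}: the middle row gives $\ker(\pi\otimes 1)=\operatorname{im}(j\otimes 1)$, i.e.\ the copy of $K(A^2(\mathbb{D}))\otimes\mathcal{T}$ embedded in $\mathcal{T}\otimes\mathcal{T}$, and symmetrically $\ker(1\otimes\pi)$ is the embedded copy of $\mathcal{T}\otimes K(A^2(\mathbb{D}))$. The claim is thus equivalent to the ideal-intersection identity
\[
\big(K(A^2(\mathbb{D}))\otimes\mathcal{T}\big)\cap\big(\mathcal{T}\otimes K(A^2(\mathbb{D}))\big)=K(A^2(\mathbb{D}))\otimes K(A^2(\mathbb{D}))
\]
of subspaces of $\mathcal{T}\otimes\mathcal{T}$, and this is where the substance of the proof lies.

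To prove this identity I would run the diagram chase that the text leaves as ``routine.'' Let $x$ lie in the intersection. From $(\pi\otimes 1)(x)=0$ and exactness of the middle row we obtain $y$ with $x=(j\otimes 1)(y)$. Applying $1\otimes\pi$ and using commutativity of the lower-left square, $(j\otimes 1)\big((1\otimes\pi)(y)\big)=(1\otimes\pi)(x)=0$; since $j\otimes 1$ is injective on the bottom row, $(1\otimes\pi)(y)=0$. Exactness of the left column then yields $z\in K(A^2(\mathbb{D}))\otimes K(A^2(\mathbb{D}))$ with $y=(1\otimes j)(z)$, whence $x=(j\otimes 1)(1\otimes j)(z)=\alpha(z)\in\operatorname{im}(\alpha)$, as required.

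The only real obstacle is foundational rather than computational: the chase is purely formal once one knows that every row and column of the big diagram is genuinely short exact and that the intersection is being computed inside a single algebra carrying an unambiguous $C^*$-norm. Both facts rest on nuclearity---of $K(A^2(\mathbb{D}))$ and of $C(\mathbb{T})$---which is exactly what allows Theorem~\ref{tensor} to be applied to the Coburn sequence~\eqref{shortexactseq} tensored with each of $K(A^2(\mathbb{D}))$, $\mathcal{T}$, and $C(\mathbb{T})$, producing the exact rows, and symmetrically the exact columns. With these in hand no analytic input is needed and the bookkeeping above closes the argument.
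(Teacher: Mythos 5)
Your proposal is correct and follows exactly the paper's route: the theorem is, as you say, a restatement of the middle-exactness of the dual sequence $K(A^2(\mathbb{D}))\otimes K(A^2(\mathbb{D}))\stackrel{\alpha}{\longrightarrow}\mathcal{T}\otimes\mathcal{T}\stackrel{(\pi\otimes 1)\oplus(1\otimes\pi)}{\longrightarrow}(C(\mathbb{T})\otimes\mathcal{T})\oplus(\mathcal{T}\otimes C(\mathbb{T}))$, combined with the identifications from Proposition~\ref{restate} and Equation~(2.1). The only difference is that the paper dismisses the chase as ``routine diagram chasing,'' whereas you carry it out explicitly (correctly using commutativity of the lower-left square, injectivity of $j\otimes 1$ on the bottom row, and exactness of the left column), so your write-up supplies the detail the paper omits.
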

In other words, the quotient algebra
$\mathcal{T}_2/K(A^2(\mathbb{D}^2))$ is isometrically $*$-embedded
in $C(\mathbb{T},\mathcal{T})\oplus C(\mathbb{T},\mathcal{T})$.

In view of the Atkinson's characterization, Fredholmness for an
operator in $\mathcal{T}_{2}$ is closely related to invertibility
criteria for operators in $\mathcal{T}$ by the theorem above. So
we have our main result as follows:

\begin{main}
Let $f\in C(\overline{\mathbb{D}^{2}})$,
$T_{f}:A^{2}(\mathbb{D}^{2})\rightarrow A^{2}(\mathbb{D}^{2})$ be
the Toeplitz operator with symbol $f\in
C(\overline{\mathbb{D}^{2}})$ and $\sigma_{e}(T_{f})$ be the
essential spectrum of $T_{f}$. Then we have
$$\sigma_{e}(T_{f})=\bigg(\bigcup_{e^{i\theta_{1}}\in\mathbb{T}}\sigma(T_{f_{\theta_{1}}})\bigg)\cup\bigg(\bigcup_{e^{i\theta_{2}}\in\mathbb{T}}\sigma(T_{f_{\theta_{2}}})\bigg)$$
where $f_{\theta_{1}}(w):=f(e^{i\theta_{1}},w)$,
$f_{\theta_{2}}(z):=f(z,e^{i\theta_{2}})$, $T_{f_{\theta_{1}}}$
and $T_{f_{\theta_{2}}}$ are associated Toeplitz operators on
$A^{2}(\mathbb{D})$ with symbols $f_{\theta_{1}}$ and
$f_{\theta_{2}}$ respectively.
\end{main}

\begin{proof}
Let $f\in C(\overline{\mathbb{D}^{2}})$, and let
$\lambda\in\mathbb{C}$ then if $\lambda\in\sigma_{e}(T_{f})$ then
by Atkinson's theorem $\lambda-[T_{f}]$ is not invertible in
$B(A^{2}(\mathbb{D}^{2}))/K(A^{2}(\mathbb{D}^{2}))$. Since
$\lambda-[T_{f}]\in\mathcal{T}_{2}/K(A^{2}(\mathbb{D}^{2}))$ and
$\mathcal{T}_{2}/K(A^{2}(\mathbb{D}^{2}))\subset
B(A^{2}(\mathbb{D}^{2}))/K(A^{2}(\mathbb{D}^{2}))$ is a closed
unital C*-subalgebra, it is inverse closed and hence
$\lambda-[T_{f}]\in\mathcal{T}_{2}/K(A^{2}(\mathbb{D}^{2}))$ is
not invertible in $\mathcal{T}_{2}/K(A^{2}(\mathbb{D}^{2}))$
neither. Since $\mathcal{T}_{2}$ is identified with
$\mathcal{T}\otimes\mathcal{T}$ and
$\gamma_{\theta_1}\oplus\gamma_{\theta_2}:(\mathcal{T}\otimes\mathcal{T})/K(A^{2}(\mathbb{D}^{2}))\rightarrow
C(\mathbb{T},\mathcal{T})\oplus C(\mathbb{T},\mathcal{T})$ is a
*-homomorphism s.t.
$\gamma_{\theta_1}\oplus\gamma_{\theta_2}(T_{f}):=F\oplus G$,
$F(e^{i\theta_1}):=T_{f_{\theta_1}}$,
$f_{\theta_1}(w):=f(e^{i\theta_1},w)$ and
$G(e^{i\theta_2}):=T_{f_{\theta_2}}$,
$f_{\theta_2}(z):=f(z,e^{i\theta_2})$, $\lambda-F$ or $\lambda-G$
is not invertible in $C(\mathbb{T},\mathcal{T})$. This is the case
iff there is $e^{i\theta_1}\in\mathbb{T}$ or
$e^{i\theta_2}\in\mathbb{T}$ s.t. either
$\lambda-T_{f_{\theta_1}}$ or $\lambda-T_{f_{\theta_2}}$ is not
invertible in $\mathcal{T}$. And since $\mathcal{T}\subset
B(A^{2}(\mathbb{D}))$ is a closed C*-subalgebra of
$B(A^{2}(\mathbb{D}))$, it is inverse closed hence either
$\lambda-T_{f_{\theta_1}}$ or $\lambda-T_{f_{\theta_2}}$ is not
invertible in $B(A^{2}(\mathbb{D}))$. And this implies that
$\lambda\in\sigma(T_{f_{\theta_1}})$ or
$\lambda\in\sigma(T_{f_{\theta_2}})$. So we have
$$\sigma_{e}(T_{f})\subseteq\bigg(\bigcup_{e^{i\theta_{1}}\in\mathbb{T}}\sigma(T_{f_{\theta_{1}}})\bigg)\cup\bigg(\bigcup_{e^{i\theta_{2}}\in\mathbb{T}}\sigma(T_{f_{\theta_{2}}})\bigg)$$
For the reverse inclusion let
$\lambda\in\bigg(\bigcup_{e^{i\theta_{1}}\in\mathbb{T}}\sigma(T_{f_{\theta_{1}}})\bigg)\cup\bigg(\bigcup_{e^{i\theta_{2}}\in\mathbb{T}}\sigma(T_{f_{\theta_{2}}})\bigg)$,
then there is $e^{i\theta_1}\in\mathbb{T}$ or
$e^{i\theta_2}\in\mathbb{T}$ s.t. $\lambda-T_{f_{\theta_1}}$ or
$\lambda-T_{f_{\theta_2}}$ is not invertible in
$B(A^{2}(\mathbb{D}))$. Again since
$\lambda-T_{f_{\theta_1}}\in\mathcal{T}$,
$\lambda-T_{f_{\theta_2}}\in\mathcal{T}$ and $\mathcal{T}$ is
inverse closed in $B(A^{2}(\mathbb{D}))$,
$\lambda-T_{f_{\theta_1}}$ or $\lambda-T_{f_{\theta_2}}$ is not
invertible in $\mathcal{T}$. And this implies that $\lambda-F$ or
$\lambda-G$ is not invertible in $C(\mathbb{T},\mathcal{T})$.
Since $\gamma_{\theta_1}\oplus\gamma_{\theta_2}$ is a *-embedding
of
$\mathcal{T}\otimes\mathcal{T}/K(A^{2}(\mathbb{D}^{2}))\cong\mathcal{T}_{2}/K(A^{2}(\mathbb{D}^{2}))$
inside $C(\mathbb{T},\mathcal{T})\oplus
C(\mathbb{T},\mathcal{T})$, this implies that $\lambda-T_{f}$ is
not invertible in $\mathcal{T}_{2}/K(A^{2}(\mathbb{D}^{2}))$.
Again the inverse closedness of
$\mathcal{T}_{2}/K(A^{2}(\mathbb{D}^{2}))$ in
$B(A^{2}(\mathbb{D}^{2}))/K(A^{2}(\mathbb{D}^{2}))$ implies that
$\lambda-T_{f}$ is not invertible in
$B(A^{2}(\mathbb{D}^{2}))/K(A^{2}(\mathbb{D}^{2}))$ neither. Hence
again by Atkinson's theorem we have $\lambda\in\sigma_{e}(T_{f})$.
So we also have the reverse inclusion
$$\bigg(\bigcup_{e^{i\theta_{1}}\in\mathbb{T}}\sigma(T_{f_{\theta_{1}}})\bigg)\cup\bigg(\bigcup_{e^{i\theta_{2}}\in\mathbb{T}}\sigma(T_{f_{\theta_{2}}})\bigg)\subseteq\sigma_{e}(T_{f})$$
which implies that
$$\sigma_{e}(T_{f})=\bigg(\bigcup_{e^{i\theta_{1}}\in\mathbb{T}}\sigma(T_{f_{\theta_{1}}})\bigg)\cup\bigg(\bigcup_{e^{i\theta_{2}}\in\mathbb{T}}\sigma(T_{f_{\theta_{2}}})\bigg)$$
\end{proof}

\begin{center}
\textbf{Acknowledgements}
\end{center}
The author would like to express his sincere thanks to his dear
friend Prof. S\"{o}nmez \c{S}ahuto\u{g}lu of Toledo University for
the main question of this paper.

\end{document}